\documentclass[12pt]{article}%
\usepackage{hyperref}
\usepackage{amsmath}
\usepackage{amsfonts}
\usepackage{amssymb}
\usepackage{graphicx}%
\newtheorem{theorem}{Theorem}[section]

\newtheorem{corollary}[theorem]{Corollary}

\newtheorem{example}[theorem]{Example}

\newtheorem{lemma}[theorem]{Lemma}

\newtheorem{proposition}[theorem]{Proposition}

\newenvironment{proof}[1][Proof]{\noindent\textbf{#1.} }{\ \rule{0.5em}{0.5em}}
\begin{document}

\title{Order-to-ring ideals via submultiplicative lattice seminorms in
lattice-ordered algebras}
\author{Karim Boulabiar\\{\small Laboratoire de Recherche LATAO}\\{\small D\'{e}partement de Mathematiques, Facult\'{e} des Sciences de Tunis}\\{\small Universit\'{e} de Tunis El Manar, 2092, El Manar, Tunisia}\\{\small Mathematical Institute, Leiden University}\\{\small P.O. Box 9512, 2300 RA Leiden, The Netherlands}
\and Rania Souissi\\{\small Laboratoire de Recherche LATAO}\\{\small D\'{e}partement de Mathematiques, Facult\'{e} des Sciences de Tunis}\\{\small Universit\'{e} de Tunis El Manar, 2092, El Manar, Tunisia}}
\maketitle

\begin{abstract}
This paper characterizes lattice-ordered algebras with the order-to-ring ideal
property, i.e., those in which every order ideal is a ring ideal. Our main
result shows that an Archimedean lattice-ordered algebra has this property if
and only if it is an $f$-algebra admitting a nil-faithful submultiplicative
lattice seminorm, where nil-faithful means that elements with zero seminorm
are nilpotent. As a consequence, we prove that an Archimedean semiprime
$f$-algebra has the order-to-ring ideal property if and only if it admits a
submultiplicative lattice norm. These results correct and extend earlier
incorrect work in the literature and yield a necessary and sufficient
condition for all orthomorphisms on an Archimedean vector lattice to be central.

\end{abstract}

\noindent{\small 2020 Mathematics Subject Classification. Primary: 06F25;
Secondary: 46B42, 47B65.}

\noindent{\small Key words and phrases. Archimedean; \emph{f}-algebra; lattice
norm; lattice-ordered algebra; nilpotent; order ideal; orthomorphism; ring
ideal; seminorm; submultiplicative.}\bigskip

\section{Introduction}

A lattice-ordered algebra (briefly, an $\ell$-algebra) is a mathematical
structure in which two fundamental features coexist, namely an algebraic
(ring) structure and a lattice-order. It is therefore natural to investigate
how these two aspects interact, in particular through the relationship between
order ideals and ring ideals. This interplay has been the subject of sustained
interest over the years, leading to a substantial body of literature (see, for
instance,
\cite{BT-1988,D-1968,HIJ-1961,HLS-1991,HP-1982,HP-1984,L-1987,MM-2013}).

In this paper, we focus on Archimedean $\ell$-algebras in which every order
ideal is automatically a ring ideal. For convenience, we refer to this
property as the \emph{order-to-ring ideal property}. A particularly
significant result in this direction was obtained by Basly and Triki in
\cite{BT-1988}. They show that an Archimedean $\ell$-algebra $A$ has the
order-to-ring ideal property if and only if it is an $f$-algebra whose
elements are all bounded in the sense that for each element $a\in A$, there
exists $\delta\in\left(  0,\infty\right)  $ such that $a^{2}\leq
\delta\left\vert a\right\vert $ (see also \cite{HLS-1991} for a related result
within the ZFC-Set Theory). As a notable consequence, they deduce that a
Banach $\ell$-algebra has the order-to-ring ideal property if and only if it
is an $f$-algebra (we refer to \cite{W-2017} for a comprehensive study of
Banach $\ell$-algebras). These results highlight the central role played by
$f$-algebras in this context and naturally motivate a more systematic
investigation of this class of structures.

Although these results provide a relevant overview of the topic under
consideration in this paper, the main motivation for this work also stems from
another statement in \cite{BT-1988}. It is claimed therein that an Archimedean
semiprime $f$-algebra enjoys the order-to-ring ideal property if and only if
it admits a lattice norm. Unfortunately, this statement turns out to be false
(its proof contains a subtle flaw), and somewhat surprisingly, it is refuted
by a relatively simple counterexample.

\begin{example}
\label{Example}Let $A$ be the set of all piecisewise polynomial real-valued
continuous functions on $\left[  0,\infty\right)  $. The pointwise addition,
multiplication, scalar multiplication, and order turn $A$ into an Archimedean
semiprime $f$-algebra. Clearly, $A$ contains unbounded functions, and
consequently an order ideal of $A$ need not be a ring ideal. Nevertheless, the
formula%
\[
\left\Vert a\right\Vert =\sup\left\{  \left\vert e^{-r}a\left(  r\right)
\right\vert :r\in\left[  0,\infty\right)  \right\}  \quad\text{for all }a\in
A
\]
defines a lattice norm on $A$.
\end{example}

In fact, our paper addresses several related objectives. First, we correct the
theorem of Basly and Triki mentioned above by identifying the additional
condition required for the equivalence to hold. Indeed, we show that an
Archimedean semiprime $f$-algebra has the order-to-ring ideal property if and
only if it admits a submultiplicative lattice norm.

This result is in fact a special case of a more general characterization of
Archimedean $f$-algebras with the order-to-ring ideal property in terms of
nil-faithful submultiplicative lattice seminorms, where nil-faithfulness means
that the seminorm vanishes only on nilpotent elements. More precisely, we
prove that an arbitrary (not necessarily semiprime) Archimedean $\ell$-algebra
has the order-to-ring ideal property if and only if it admits a nil-faithful
submultiplicative lattice seminorm.

In addition, we extend the validity of the aforementioned result by Basly and
Triki on Banach $\ell$-algebras to the broader setting of (not necessarily
complete) normed $\ell$-algebras.

Finally, we will apply the main result to orthomorphisms on vector lattices.
We know that if $L$ is an Archimedean vector lattice, then $\mathrm{Orth}%
\left(  L\right)  $, the set of all orthomorphisms on $L$, is an Archimedean
semiprime $f$-algebra \cite{Z-1983}. As a consequence of our main result, we
obtain that, if $Z\left(  L\right)  $ denotes the center of $L$, then
$\mathrm{Orth}\left(  L\right)  =Z\left(  L\right)  $ if and only if
$\mathrm{Orth}\left(  L\right)  $ can be equipped with a submultiplicative
seminorm. This may be regarded as a modest contribution to the long-standing
open problem, viz., characterizing Archimedean vector lattices $L$ satisfying
$\mathrm{Orth}\left(  L\right)  =Z\left(  L\right)  $ (we shall discuss a
partial answer, yet an important one, due to Wickstead \cite{W-1977}, in the
last section).

The paper is organized as follows. We begin by recalling the necessary
preliminaries and introducing the notion of bounded elements, which plays a
key role in our approach. We then establish our main results and derive
several consequences, culminating in an application to orthomorphisms on
Archimedean vector lattices.

\section{Some preliminaries}

We assume familiarity with the elementary theory of vector lattices (also
known as Riesz spaces). For a comprehensive treatment of this subject, we
refer the reader to the classical monograph \cite{AB-2006-PO} by Aliprantis
and Burkinshaw.

Throughout, all vector lattices considered are real and Archimedean. Recall
that a vector lattice $L$ is said to be \emph{Archimedean} if, for every
$x,y\in L$,%
\[
nx\leq y\text{ for all }n\in\left\{  1,2,...\right\}  \text{\quad
implies\quad}x\leq0.
\]
A vector subspace $J$ of a vector lattice $L$ is called an \emph{order ideal}
of $L$ if it is \emph{solid} in $L$, i.e., if%
\[
\left\vert x\right\vert \leq\left\vert y\right\vert \text{ in }L\text{ and
}y\in J\text{\quad imply\quad}x\in J.
\]
It is not hard to verify that the vector subspace $J$ of $L$ is an order ideal
of $L$ if and only if $J$ is a \emph{vector sublattice} of $L$, i.e.,%
\[
\left\vert x\right\vert \in J\text{\quad for all }x\in J,
\]
and%
\[
0\leq x\leq y\text{ in }L\text{ and }x\in y\text{\quad imply\quad}x\in J.
\]
For instance, if $Y$ is a non-empty subset of the vector lattice $L$ then its
\emph{disjoint complement} defined by%
\[
Y^{d}=\left\{  x\in L:\left\vert x\right\vert \wedge\left\vert y\right\vert
=0\text{ for all }y\in Y\right\}
\]
is an order ideal of $L$. Furthermore, for each $x\in L$, the set%
\[
L_{x}=\left\{  y\in L:\left\vert y\right\vert \leq\delta\left\vert
x\right\vert \text{ for some }\delta\in\left(  0,\infty\right)  \right\}
\]
is also an order ideal of $L$. It is, in fact, the smallest order ideal
containing $x$, and is referred to as the \emph{principal order ideal} of $L$
generated by $x$.

The next paragraph provides a brief overview of the theory of $f$-algebras. A
detailed exposition can be found in the final chapter of Zaanen's monograph
\cite{Z-1983}, while a particularly thorough and insightful treatment is given
in de Pagter's doctoral thesis \cite{P-1981}.

An associative real algebra $A$ is called a \emph{lattice-ordered algebra}
(briefly, an $\ell$\emph{-algebra}) if it is simultaneously a vector lattice
whose positive cone%
\[
A^{+}=\left\{  a\in A:0\leq a\right\}
\]
is \emph{closed under multiplication}, i.e.,%
\[
ab\in A^{+}\text{\quad for all }a,b\in A^{+},
\]
or, equivalently,%
\[
\left\vert ab\right\vert \leq\left\vert a\right\vert \left\vert b\right\vert
\text{\quad for all }a,b\in A.
\]
As usual, a vector subspace $J$ of the $\ell$-algebra $A$ is called a
\emph{ring ideal} of $A$ if%
\[
ax\in J\text{ and }xa\in J\text{\quad for all }a\in A\text{ and }x\in J.
\]
The $\ell$-algebra $A$ is called an $f$\emph{-algebra} if the disjoint
complement%
\[
\left\{  a\right\}  ^{d}=\left\{  x\in A:\left\vert a\right\vert
\wedge\left\vert x\right\vert =0\right\}
\]
of any element $a\in A$ is a ring ideal of $A$. A short moment's thought
reveals that, if $A$ is an $f$-algebra, then%
\[
ab=0\text{\quad for all }a,b\in A\text{ with }a\wedge b=0.
\]
It follows that%
\[
\left\vert ab\right\vert =\left\vert a\right\vert \left\vert b\right\vert
\text{\quad for all }a,b\in A.
\]
Moreover,%
\[
a^{+}a^{-}=a^{-}a^{+}=0\text{\quad for all }a\in A,
\]
so,%
\[
a^{+}a=\left(  a^{+}\right)  ^{2}\text{ and }a^{2}=\left\vert a\right\vert
^{2}\text{\quad for all }a\in A.
\]
In particular, every square in an $f$-algebra is positive. Moreover, a
fundamental result in the theory of $f$-algebras asserts that every
Archimedean $f$-algebra is commutative (see, e.g., \cite[Theorem
2.56]{AB-2006-PO}). Consequently, if $N\left(  A\right)  $ denotes the set of
all nilpotent elements of an $\ell$-algebra $A$, i.e.,%
\[
N\left(  A\right)  =\left\{  a\in A:a^{n}=0\right\}  ,
\]
then the nilradical of an Archimedean $f$-algebra $A$ coincides with $N\left(
A\right)  $.

We now record some basic properties of the nilpotent elements in an
Archimedean $f$-algebra (proofs can be found in \cite[Proposition
10.2]{P-1981}).

\begin{lemma}
\label{Pagter}Let $A$ be an Archimedean $f$-algebra. Then the following
assertions hold.

\begin{enumerate}
\item[\emph{(i)}] $N\left(  A\right)  =\left\{  a\in A:a^{2}=0\right\}
=\left\{  a\in A:ax=0\text{ for all }x\in A\right\}  $.

\item[\emph{(ii)}] $ab\in N\left(  A\right)  ^{d}$ for all $a,b\in A$.
\end{enumerate}
\end{lemma}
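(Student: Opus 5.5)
The plan is to prove both parts directly from the $f$-algebra axioms, using commutativity of Archimedean $f$-algebras and the identity $\left\vert ab\right\vert =\left\vert a\right\vert \left\vert b\right\vert$. Write $N_{2}=\left\{ a:a^{2}=0\right\}$ and $N_{0}=\left\{ a:aA=\{0\}\right\}$. The inclusions $N_{0}\subseteq N_{2}\subseteq N\left( A\right)$ are trivial. Two nontrivial steps remain for (i): first $N\left( A\right) \subseteq N_{2}$, then $N_{2}\subseteq N_{0}$. After that, (ii) follows.

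\textbf{Step 1: $a^{n}=0$ implies $a^{2}=0$.} Since $\left\vert a^{n}\right\vert =\left\vert a\right\vert ^{n}$, we may assume $a\geq0$. By induction it suffices to show that $a\geq0$ and $a^{k}=0$ with $k\geq3$ give $a^{k-1}=0$. Put $b=a^{k-2}\geq0$; then $b^{2}=a^{2k-4}=0$ because $2k-4\geq k$. For every real $t>0$ we have $0\leq\left( ta-t^{-1}b\right) ^{2}=t^{2}a^{2}-2ab+t^{-2}b^{2}$, using commutativity and that squares are positive. Choosing $t$ with $t^{2}=1/n$ gives $2nab\leq a^{2}$ for all $n$. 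The Archimedean property then yields $ab\leq0$, so $a^{k-1}=ab=0$.

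\textbf{Step 2: $a^{2}=0$ implies $ax=0$ for all $x\in A$.} Again we may take $a\geq0$ and $x\geq0$, since $\left\vert ax\right\vert =\left\vert a\right\vert \left\vert x\right\vert$ and $\left\vert a\right\vert ^{2}=a^{2}$. The same square trick applies: $0\leq\left( ta-t^{-1}x\right) ^{2}=t^{2}a^{2}-2ax+t^{-2}x^{2}=-2ax+t^{-2}x^{2}$. Hence $2n\,ax\leq x^{2}$ for all $n$, and the Archimedean property gives $ax=0$. This completes (i).

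\textbf{Part (ii).} Part (i) shows that $N\left( A\right)$ is an order ideal (it equals $N_{0}$, which is solid) and a ring ideal. Let $u\in N\left( A\right)$ and $a,b\in A$; we must show $\left\vert ab\right\vert \wedge\left\vert u\right\vert =0$. Put $w=\left\vert ab\right\vert \wedge\left\vert u\right\vert$. Then $w\in N\left( A\right)$ by solidity, and $w\leq\left\vert a\right\vert \left\vert b\right\vert$. This is the main obstacle: showing that a positive nilpotent dominated by a product $\left\vert a\right\vert \left\vert b\right\vert$ must vanish.

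The first attempt is the standard fact that in an Archimedean $f$-algebra, $0\leq w\leq pq$ with $p,q\geq0$ forces $w$ to be a limit of elements of the form $p\cdot(\ldots)$. Concretely, I would use the estimate $w\leq\varepsilon^{-1}p^{2}\ldots$ or, more simply, the inequality $w\leq pq\leq\tfrac{1}{2}\left( t p^{2}+t^{-1}q^{2}\right)$ combined with a disjointness argument. Namely, $w$ annihilates everything, so $w\cdot A=0$. Since $\{w\}^{d}$ is a ring ideal, $\left\vert a\right\vert \left\vert b\right\vert$ splits through the band components. Failing a slick route, I would represent the $f$-algebra and reduce to the known argument in \cite[Proposition 10.2]{P-1981}, which may simply be cited, as the paper does.
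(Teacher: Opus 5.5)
\textbf{There is a genuine gap, in two places.} First, your Step 1 fails at $k=3$. With $b=a^{k-2}=a$ you get $b^{2}=a^{2}$, since $2k-4\geq k$ holds only for $k\geq4$. So the descent stops at $a^{3}=0$, and the square trick gives nothing there.

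This cannot be patched with the same tools. Take $\mathbb{R}^{2}$ with coordinatewise order and product $ab=a_{1}b_{1}e_{2}$. Commutativity, positivity of squares, $\left\vert ab\right\vert =\left\vert a\right\vert \left\vert b\right\vert$ and the Archimedean property all hold, yet $e_{1}^{3}=0\neq e_{2}=e_{1}^{2}$. What fails there is the actual $f$-algebra axiom: $e_{1}\perp e_{2}$ but $e_{1}e_{1}\not\perp e_{2}$. Your treatment of (i) never invokes that axiom.

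Second, part (ii) is not proved. The remark about $\left\vert a\right\vert \left\vert b\right\vert$ splitting through band components is not an argument, and you end by deferring to de Pagter's Proposition 10.2. The paper does exactly that for the whole lemma and gives no proof of its own, so citing is legitimate. But then your partial proof of (i), as written, is incorrect and should be repaired or dropped.

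\textbf{A fix for both parts.} The key claim is: if $x,y\geq0$, $0\leq v\leq xy$ and $xv=0$, then $v=0$.
\begin{itemize}
\item Put $p_{n}=(y-nv)^{+}$ and $q_{n}=(nv-y)^{+}$.
\item Since $y=p_{n}+y\wedge nv$ and $0\leq x(y\wedge nv)\leq nxv=0$, we get $xy=xp_{n}$.
\item Since $p_{n}\perp q_{n}$ and $\{q_{n}\}^{d}$ is a ring ideal, $xy\perp q_{n}$, hence $v\perp q_{n}$.
\item From $nv\leq y+q_{n}$ we get $nv\leq nv\wedge y+nv\wedge q_{n}\leq y$ for every $n$, so $v=0$ by the Archimedean property.
\end{itemize}

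For (i), let $a\geq0$ with $a^{k}=0$ and $k\geq3$. Taking $x=a$, $y=a^{k-2}$, $v=a^{k-1}$ gives $a^{k-1}=0$ for every such $k$, including $k=3$. Together with your Step 2, which is correct, this proves (i).

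For (ii), take $x=\left\vert a\right\vert$, $y=\left\vert b\right\vert$, $u\in N(A)^{+}$ and $v=\left\vert ab\right\vert \wedge u$. By (i), $N(A)$ is solid and annihilates $A$, so $xv=0$, and the claim gives $v=0$. This claim is just the $f$-algebra form of the fact that the range of an orthomorphism is disjoint from its kernel.
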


These results will be used to establish the following technical lemma, which
concludes this preliminary section.

\begin{lemma}
\label{Fundamental}Let $\delta\in\left(  0,\infty\right)  $ and $a$ be an
element of an Archimedean $f$-algebra $A$. Then $a^{2}\leq\delta\left\vert
a\right\vert $ if and only if $\left\vert ax\right\vert \leq\delta\left\vert
x\right\vert $ for all $x\in A$.
\end{lemma}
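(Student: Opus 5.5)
The reverse implication is immediate. If $\left\vert ax\right\vert \leq\delta\left\vert x\right\vert$ for all $x\in A$, take $x=a$. Since $a^{2}\geq0$ in an $f$-algebra, this gives $a^{2}=\left\vert a^{2}\right\vert \leq\delta\left\vert a\right\vert$.

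For the forward implication, suppose $a^{2}\leq\delta\left\vert a\right\vert$ and fix $x\in A$. In an $f$-algebra $\left\vert ax\right\vert =\left\vert a\right\vert \left\vert x\right\vert$ and $a^{2}=\left\vert a\right\vert ^{2}$, so we may replace $a$ and $x$ by $\left\vert a\right\vert$ and $\left\vert x\right\vert$. We therefore assume $a,x\in A^{+}$ and must show $ax\leq\delta x$. Set
\[
w=ax-\delta x.
\]
The goal is $w^{+}=0$.

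\textbf{Step 1: $aw^{+}=0$.} We have $aw=(a^{2}-\delta a)x\leq0$, because $a^{2}-\delta a\leq0$, $x\geq0$, and $A^{+}$ is closed under multiplication. Since $w^{+}\wedge w^{-}=0$ and $\{w^{-}\}^{d}$ is a ring ideal, $aw^{+}\in\{w^{-}\}^{d}$, so $aw^{+}\wedge w^{-}=0$. Applying the same argument to $aw^{+}$ and $\{aw^{+}\}^{d}$ gives $aw^{+}\perp aw^{-}$. Both are positive, so $aw=aw^{+}-aw^{-}$ is a difference of disjoint positive elements and hence $(aw)^{+}=aw^{+}$. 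As $aw\leq0$, we conclude $aw^{+}=0$.

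\textbf{Step 2: $w^{+}$ is nilpotent.} Since $x\geq0$ we have $w\leq ax$, and since $ax\geq0$ also $0\leq w^{+}\leq ax$. Multiplying by $w^{+}\geq0$ and using commutativity of Archimedean $f$-algebras,
\[
0\leq(w^{+})^{2}\leq axw^{+}=x(aw^{+})=0.
\]
Hence $(w^{+})^{2}=0$, and Lemma \ref{Pagter}(i) gives $w^{+}\in N(A)$.

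\textbf{Step 3: $w^{+}=0$.} By Lemma \ref{Pagter}(ii), $ax\in N(A)^{d}$. This set is an order ideal and $0\leq w^{+}\leq ax$, so $w^{+}\in N(A)^{d}$. Thus $w^{+}\in N(A)\cap N(A)^{d}=\{0\}$, which yields $ax\leq\delta x$.

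The main obstacle is that $A$ need not be semiprime, so $(w^{+})^{2}=0$ alone does not give $w^{+}=0$. Step 3 overcomes this by combining the bound $w^{+}\leq ax$ with the fact that products lie in $N(A)^{d}$.
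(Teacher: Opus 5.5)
Your proof is correct and follows essentially the same route as the paper: show that $w^{+}=(|ax|-\delta|x|)^{+}$ has zero square by bounding $(w^{+})^{2}$ by a product that vanishes, then use $0\le w^{+}\le |ax|\in N(A)^{d}$ (Lemma \ref{Pagter}(ii)) to conclude $w^{+}=0$. The only difference is cosmetic: the paper gets the vanishing product in one line as $|ax|\,w^{+}=(a^{2}-\delta|a|)^{+}x^{2}=0$, whereas you derive $aw^{+}=0$ from the ring-ideal property of disjoint complements and then use commutativity.
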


\begin{proof}
The sufficiency being trivial, we prove the necessity. Assume that $a^{2}%
\leq\delta\left\vert a\right\vert $ and let $x\in A$. Since $\left(
a^{2}-\delta\left\vert a\right\vert \right)  ^{+}=0$, we get%
\[
\left\vert ax\right\vert \left(  \left\vert ax\right\vert -\delta\left\vert
x\right\vert \right)  ^{+}=\left(  a^{2}-\delta\left\vert a\right\vert
\right)  ^{+}x^{2}=0.
\]
Accordingly,%
\[
0\leq\left[  \left(  \left\vert ax\right\vert -\delta\left\vert x\right\vert
\right)  ^{+}\right]  ^{2}\leq\left\vert ax\right\vert \left(  \left\vert
ax\right\vert -\delta\left\vert x\right\vert \right)  ^{+}=0.
\]
It follows that%
\[
\left(  \left\vert ax\right\vert -\delta\left\vert x\right\vert \right)
^{+}\in N\left(  A\right)  .
\]
Moreover, using Lemma \ref{Pagter} $\mathrm{(ii)}$ and the inequalities%
\[
0\leq\left(  \left\vert ax\right\vert -\delta\left\vert x\right\vert \right)
^{+}\leq\left\vert ax\right\vert ,
\]
we derive that%
\[
\left(  \left\vert ax\right\vert -\delta\left\vert x\right\vert \right)
^{+}\in N\left(  A\right)  ^{d}.
\]
But then $\left(  \left\vert ax\right\vert -\delta\left\vert x\right\vert
\right)  ^{+}=0$, and the lemma follows.
\end{proof}

\section{Bounded elements}

An element $a$ of an Archimedean $f$-algebra is said to be \emph{bounded} if%
\[
a^{2}\leq\delta\left\vert a\right\vert \text{\quad for some }\delta\in\left(
0,\infty\right)  .
\]
Following the notation of \cite{GJ-1976}, we denote by $A^{\ast}$ the set all
bounded elements of $A$, i.e.,%
\[
A^{\ast}=\left\{  a\in A:a^{2}\leq\delta\left\vert a\right\vert \text{ for
some }\delta\in\left(  0,\infty\right)  \right\}  .
\]
By Lemma \ref{Fundamental}, this set can equivalently be characterized as%
\[
A^{\ast}=\left\{  a\in A:\left\vert ax\right\vert \leq\delta\left\vert
x\right\vert \text{ for some }\delta\in\left(  0,\infty\right)  \text{ and all
}x\in A\right\}  .
\]
Consequently, the following result can be obtained by a routine computation,
and we therefore omit the proof.

\begin{proposition}
\label{Bounded}If $A$ is an Archimedean $f$-algebra, then $A^{\ast}$ is both
an order ideal and a subalgebra of $A$.
\end{proposition}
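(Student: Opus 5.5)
The plan is to work throughout with the second description of $A^{\ast}$ supplied by Lemma \ref{Fundamental}: $a\in A^{\ast}$ if and only if there is $\delta\in\left(0,\infty\right)$ with $\left\vert ax\right\vert \leq\delta\left\vert x\right\vert $ for all $x\in A$. This reformulation is what makes both closure properties essentially linear. I will also use the $f$-algebra identity $\left\vert ab\right\vert =\left\vert a\right\vert \left\vert b\right\vert $.

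\emph{Order ideal.} Clearly $0\in A^{\ast}$. For scalars, if $\left\vert ax\right\vert \leq\delta\left\vert x\right\vert $ for all $x$, then $\left\vert \lambda ax\right\vert \leq(\left\vert \lambda\right\vert \delta+1)\left\vert x\right\vert $, which handles $\lambda=0$ without fuss. For sums, if $a,b\in A^{\ast}$ with constants $\delta_{a},\delta_{b}$, then
\[
\left\vert (a+b)x\right\vert \leq\left\vert ax\right\vert +\left\vert bx\right\vert \leq(\delta_{a}+\delta_{b})\left\vert x\right\vert .
\]
So $A^{\ast}$ is a vector subspace. For solidity, suppose $\left\vert c\right\vert \leq\left\vert a\right\vert $ with $a\in A^{\ast}$. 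Then for every $x\in A$,
\[
\left\vert cx\right\vert =\left\vert c\right\vert \left\vert x\right\vert \leq\left\vert a\right\vert \left\vert x\right\vert =\left\vert ax\right\vert \leq\delta\left\vert x\right\vert ,
\]
where the middle inequality holds because multiplication by the positive element $\left\vert x\right\vert $ is a positive map. Hence $c\in A^{\ast}$.

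\emph{Subalgebra.} For $a,b\in A^{\ast}$, apply the estimates twice:
\[
\left\vert abx\right\vert =\left\vert a(bx)\right\vert \leq\delta_{a}\left\vert bx\right\vert \leq\delta_{a}\delta_{b}\left\vert x\right\vert .
\]
Thus $ab\in A^{\ast}$, and together with the subspace property this shows that $A^{\ast}$ is a subalgebra.

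Each step is routine once Lemma \ref{Fundamental} is available. The one point needing care is that the lemma is invoked in both directions. In one direction it turns membership $a^{2}\leq\delta\left\vert a\right\vert $ into the multiplier bound. In the other direction, where I have only shown the multiplier bound for $a+b$, $c$ or $ab$, it takes me back to the defining inequality, using the trivial implication of the lemma with $x$ equal to the element itself. This is the only place where the Archimedean hypothesis, hidden inside Lemma \ref{Fundamental}, is actually used.
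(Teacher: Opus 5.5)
Your proof is correct and is essentially the routine computation the paper has in mind: the paper omits the proof, citing only the multiplier characterization of $A^{\ast}$ from Lemma \ref{Fundamental}, and your estimates for sums, solid images and products are the same ones it later writes out for $C(A)$ in Lemma \ref{Convex}. One small point about your closing remark: the Archimedean hypothesis is used in the nontrivial direction of Lemma \ref{Fundamental}, passing from $a^{2}\leq\delta\left\vert a\right\vert$ to the multiplier bound, and not in the trivial return step with $x$ equal to the element itself.
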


We derive in particular from Proposition \ref{Bounded} that, if $A\ $is an
Archimedean $f$-algebra, then $A^{\ast}$ is an Archimedean $f$-algebra in its
own right.

Recall now that a seminorm $p$ on a vector lattice $L$ is called a
\emph{lattice} (or, \emph{Riesz}) seminorm if%
\[
p\left(  x\right)  \leq p\left(  y\right)  \text{ whenever }x,y\in L\text{
satisfy }\left\vert x\right\vert \leq\left\vert y\right\vert .
\]
A seminorm $p$ on $L$ is a lattice seminorm if and only if%
\[
p\left(  \left\vert x\right\vert \right)  =p\left(  x\right)  \text{\quad for
all }x\in L
\]
and%
\[
p\left(  x\right)  \leq p\left(  y\right)  \text{\quad for all }x,y\in L\text{
with }0\leq x\leq y.
\]
On the other hand, a seminorm $p$ on an associative real algebra $A$ is said
to be \emph{submultiplicative} if%
\[
p\left(  ab\right)  \leq p\left(  a\right)  p\left(  b\right)  \text{\quad for
all }a,b\in A.
\]
We also say that a seminorm $p$ on $A$ is \emph{nil-faithful} if $p$ vanishes
only on nilpotent elements, that is, if $a\in A$ and $p\left(  a\right)  =0$
then $a$ is necessarily nilpotent. Trivially, any norm on $A$ is nil-faithful.
A fundamental example of a nil-faithful submultiplicative lattice semi norms
on the bounded elements of an Archimedean $f$-algebra is presented next.

For the rest of this section, we fix an Archimedean $f$-algebra $A$ and
consider the set%
\[
C\left(  A\right)  =\left\{  a\in A:a^{2}\leq\left\vert a\right\vert \right\}
.
\]
From Lemma \ref{Fundamental}, it follows straightforwardly that%
\[
C\left(  A\right)  =\left\{  a\in A:\left\vert ax\right\vert \leq\left\vert
x\right\vert \text{ for all }x\in A\right\}  .
\]
The following lemma collects elementary properties of $C\left(  A\right)  $.

\begin{lemma}
\label{Convex}Let $A$ be an Archimedean $f$-algebra. Then the following hold.

\begin{enumerate}
\item[\emph{(i)}] $C\left(  A\right)  $ is convex, i.e.,%
\[
\delta a+\left(  1-\delta\right)  b\in C\left(  A\right)  \text{ for all
}a,b\in C\left(  A\right)  \text{ and }\delta\in\left[  0,1\right]  .
\]

\item[\emph{(ii)}] $C\left(  A\right)  $ is solid.

\item[\emph{(iii)}] $C\left(  A\right)  $ is idempotent, i.e.,%
\[
ab\in C\left(  A\right)  \quad\text{for all }a,b\in C\left(  A\right)  .
\]

\end{enumerate}
\end{lemma}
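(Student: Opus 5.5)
The plan is to work throughout with the second description of $C\left(A\right)$, obtained from Lemma \ref{Fundamental} with $\delta=1$:
\[
C\left(A\right)=\left\{a\in A:\left\vert ax\right\vert \leq\left\vert x\right\vert \text{ for all }x\in A\right\}.
\]
With this description, each of the three properties reduces to a short inequality in the vector lattice $A$. The only tools needed are $\left\vert ab\right\vert =\left\vert a\right\vert \left\vert b\right\vert$ and the distributivity of multiplication over addition.

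For (ii), let $\left\vert b\right\vert \leq\left\vert a\right\vert$ with $a\in C\left(A\right)$, and fix $x\in A$. Since $A$ is an $f$-algebra, $\left\vert bx\right\vert =\left\vert b\right\vert \left\vert x\right\vert$. Multiplication by the positive element $\left\vert x\right\vert$ preserves order, so
\[
\left\vert b\right\vert \left\vert x\right\vert \leq\left\vert a\right\vert \left\vert x\right\vert =\left\vert ax\right\vert \leq\left\vert x\right\vert.
\]
Hence $b\in C\left(A\right)$.

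For (i), take $a,b\in C\left(A\right)$, $\delta\in\left[0,1\right]$ and $x\in A$. The triangle inequality gives
\[
\left\vert \left(\delta a+\left(1-\delta\right)b\right)x\right\vert \leq\delta\left\vert ax\right\vert +\left(1-\delta\right)\left\vert bx\right\vert \leq\delta\left\vert x\right\vert +\left(1-\delta\right)\left\vert x\right\vert =\left\vert x\right\vert,
\]
so the convex combination lies in $C\left(A\right)$.

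For (iii), take $a,b\in C\left(A\right)$. Associativity gives
\[
\left\vert \left(ab\right)x\right\vert =\left\vert a\left(bx\right)\right\vert \leq\left\vert bx\right\vert \leq\left\vert x\right\vert,
\]
where the first inequality applies the defining property of $a$ to the element $bx$. Thus $ab\in C\left(A\right)$.

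No step is really an obstacle; the only substantive input is the equivalence from Lemma \ref{Fundamental}, which lets us avoid working with squares. One could instead try to prove convexity from the definition $a^{2}\leq\left\vert a\right\vert$ directly. Then one would need to control the cross term $2\delta\left(1-\delta\right)ab$, which is where that approach gets awkward, and the operator-type characterization sidesteps this entirely.
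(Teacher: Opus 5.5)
Your proof is correct, and parts (i) and (ii) coincide with the paper's argument via the characterization $C\left(A\right)=\left\{a\in A:\left\vert ax\right\vert\leq\left\vert x\right\vert\text{ for all }x\in A\right\}$. For (iii) the paper instead works with the defining inequality directly: $\left(ab\right)^{2}=a^{2}b^{2}\leq\left\vert a\right\vert\left\vert b\right\vert=\left\vert ab\right\vert$. This uses the commutativity of Archimedean $f$-algebras and the fact that inequalities between positive elements can be multiplied. Your composition argument $\left\vert\left(ab\right)x\right\vert=\left\vert a\left(bx\right)\right\vert\leq\left\vert bx\right\vert\leq\left\vert x\right\vert$ needs only associativity, so it is slightly more economical, though both are one-line verifications.
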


\begin{proof}
$\mathrm{(i)}$ Let $a,b\in C\left(  A\right)  $ and $\delta\in\left[
0,1\right]  $. For every $x\in A$, we have%
\begin{align*}
\left\vert \left(  \delta a+\left(  1-\delta\right)  b\right)  x\right\vert
&  \leq\delta\left\vert ax\right\vert +\left(  1-\delta\right)  \left\vert
bx\right\vert \\
&  \leq\delta\left\vert x\right\vert +\left(  1-\delta\right)  \left\vert
x\right\vert =\left\vert x\right\vert .
\end{align*}
We derive that $C\left(  A\right)  $ is convex.

$\mathrm{(ii)}$ Let $a\in C\left(  A\right)  $ and $b\in A$ such that
$\left\vert b\right\vert \leq\left\vert a\right\vert $. If $x\in A$ then%
\[
\left\vert bx\right\vert =\left\vert b\right\vert \left\vert x\right\vert
\leq\left\vert a\right\vert \left\vert x\right\vert =\left\vert ax\right\vert
\leq\left\vert x\right\vert .
\]
It follows that $b\in C\left(  A\right)  $, so $C\left(  A\right)  $ is solid.

$\mathrm{(iii)}$ If $a,b\in C\left(  A\right)  $ then%
\[
\left(  ab\right)  ^{2}=a^{2}b^{2}\leq\left\vert a\right\vert \left\vert
b\right\vert =\left\vert ab\right\vert .
\]
Hence, $C\left(  A\right)  $ is idempotent and the proof is complete.
\end{proof}

It is well-known that the Minkowski functional (gauge) of $C\left(  A\right)
$ is defined by%
\[
p_{\infty}\left(  a\right)  =\inf\left\{  \delta\in\left(  0,\infty\right)
:\frac{1}{\delta}a\in C\left(  A\right)  \right\}  \quad\text{for all }a\in
A,
\]
where, by convention, $\inf\emptyset=\infty$. Clearly, $p_{\infty}\left(
a\right)  $ can equivalently be written as%
\[
p_{\infty}\left(  a\right)  =\inf\left\{  \delta\in\left(  0,\infty\right)
:a^{2}\leq\delta\left\vert a\right\vert \right\}  \quad\text{for all }a\in A.
\]
It follows that%
\[
A^{\ast}=\left\{  a\in A:p_{\infty}\left(  a\right)  <\infty\right\}  .
\]
Thus, $p_{\infty}$ is a well-defined real-valued function on $A^{\ast}$. In
view of the properties of $C\left(  A\right)  $ listed in Lemma \ref{Convex},
and by the remark following \cite[Definition 8.45]{AB-2006-IDA} together with
\cite[Lemma 1.2]{M-1952}, we derive that $p_{\infty}$ is a submultiplicative
lattice seminorm on $A^{\ast}$. As the next proposition shows, $p_{\infty}$
enjoys an additional property.

\begin{proposition}
\label{Seminorm}Let $A$ be an Archimedean $f$-algebra. Then $p_{\infty}$ is
nil-faithful submultiplicative lattice seminorm on $A^{\ast}$.
\end{proposition}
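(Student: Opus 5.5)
Since the paper has already recorded that $p_{\infty}$ is a submultiplicative lattice seminorm on $A^{\ast}$ (from Lemma \ref{Convex} together with the cited facts on Minkowski functionals), the only thing left to prove is nil-faithfulness. So fix $a\in A^{\ast}$ with $p_{\infty}(a)=0$; the goal is to show that $a$ is nilpotent.

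The plan has four steps.

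\begin{enumerate}
\item[(1)] Since $p_{\infty}(a)=0$, for every $n\in\{1,2,\dots\}$ there is $\delta_n\in(0,1/n)$ with $a^{2}\leq\delta_n|a|$. Because $|a|\geq 0$ and $\delta_n<1/n$, this gives
\[
n a^{2}\leq n\delta_n|a|\leq |a|\quad\text{for all } n.
\]

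\item[(2)] Apply the Archimedean property of $A$, with $x=a^{2}$ and $y=|a|$. This yields $a^{2}\leq 0$.

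\item[(3)] In an $f$-algebra every square is positive, so $a^{2}\geq 0$. Combined with step (2), $a^{2}=0$.

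\item[(4)] By Lemma \ref{Pagter}(i), $a^{2}=0$ means exactly that $a\in N(A)$. Hence $p_{\infty}$ is nil-faithful.
\end{enumerate}

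I do not expect a real obstacle here. The one point needing care is that the infimum defining $p_{\infty}(a)$ may not be attained. That is why step (1) uses the existence of $\delta_n<1/n$ in the admissible set, rather than $\delta=0$ itself.
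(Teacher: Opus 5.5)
Your proposal is correct and follows the paper's argument: pick $\delta<1/n$ with $a^{2}\leq\delta|a|$, deduce $na^{2}\leq|a|$ for all $n$, and use the Archimedean property to get $a^{2}=0$. Your appeal to the positivity of squares to pass from $a^{2}\leq 0$ to $a^{2}=0$ makes explicit a step the paper leaves implicit, and Lemma \ref{Pagter}(i) is not needed at the end, since $a^{2}=0$ already means $a$ is nilpotent.
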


\begin{proof}
It remains to show that $p_{\infty}$ is nil-faithful on $A^{\ast}$. Let $a\in
A^{\ast}$ satisfy $p_{\infty}\left(  a\right)  =0$. Then,%
\[
\inf\left\{  \delta\in\left(  0,\infty\right)  :a^{2}\leq\delta\left\vert
a\right\vert \right\}  =0.
\]
Therefore, for every $n\in\left\{  1,2,...\right\}  $, there exists $\delta
\in\left(  0,\infty\right)  $ such that%
\[
a^{2}\leq\delta\left\vert a\right\vert \quad\text{and\quad}\delta<\frac{1}%
{n}.
\]
It follows that%
\[
na^{2}\leq\left\vert a\right\vert \quad\text{for all }n\in\left\{
1,2,...\right\}  .
\]
Since $A$ is Archimedean, this yields $a^{2}=0$ and completes the proof.
\end{proof}

We conclude this section with the following simple observation, which will be
used later with no specific mention. Using \cite[Lemma 5.50]{AB-2006-IDA}, we have%

\[
C\left(  A\right)  =\left\{  a\in A:a^{2}\leq\left\vert a\right\vert \right\}
=\left\{  a\in A:p_{\infty}\left(  a\right)  \leq1\right\}  .
\]
In particular, for every $a\in A$ and $\delta\in\left[  0,\infty\right)  $, we
have%
\[
p_{\infty}\left(  a\right)  \leq\delta\text{\quad if and only\quad if }%
a^{2}\leq\delta\left\vert a\right\vert .
\]

\section{Main results}

We begin this section with the following lemma, which shows that, if $A$ is an
Archimedean $f$-algebra, then any nil-faithful submultiplicative lattice
seminorm on $A$ ---if one exists--- is at least as large as $p_{\infty}$.

\begin{lemma}
\label{Gauge}A lattice seminorm $p$ on an Archimedean $f$-algebra $A$ is
nil-faithful and submultiplicative if and only if%
\[
p_{\infty}\left(  a\right)  \leq p\left(  a\right)  \quad\text{for all }a\in
A.
\]

\end{lemma}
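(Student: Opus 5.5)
We prove the two implications separately. The converse is short; in the forward direction the main work is to produce, for a fixed $a$, a positive element on which multiplication by $|a|$ acts at least like the scalar $\delta'>p(a)$.

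\textbf{Sufficiency.} Assume $p_{\infty}\leq p$ on $A$. Since $p$ is real-valued, this forces $A=A^{\ast}$.
\begin{itemize}
\item[$\bullet$] \emph{Nil-faithful.} If $p(a)=0$, then $p_{\infty}(a)=0$. By the Archimedean argument in the proof of Proposition \ref{Seminorm}, $a^{2}=0$, so $a$ is nilpotent.
\item[$\bullet$] \emph{Submultiplicative.} Let $a,b\in A$ and put $\delta=p_{\infty}(a)\in[0,\infty)$. By the observation closing Section 3, $a^{2}\leq\delta|a|$. Lemma \ref{Fundamental} then gives $|ab|\leq\delta|b|$ when $\delta>0$. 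When $\delta=0$ we have $|ab|\leq\varepsilon|b|$ for every $\varepsilon>0$, and the Archimedean property yields $ab=0$. In both cases, since $p$ is a lattice seminorm,
\[
p(ab)\leq p_{\infty}(a)p(b)\leq p(a)p(b).
\]
\end{itemize}

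\textbf{Necessity.} Let $p$ be a nil-faithful submultiplicative lattice seminorm. Fix $a\in A$, put $\delta=p(a)$, and let $\delta'>\delta$ be arbitrary. It suffices to show $a^{2}\leq\delta'|a|$. Indeed, then $a^{2}\leq\delta|a|$ follows from the Archimedean property, which is exactly $p_{\infty}(a)\leq\delta$.

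Set $w=(a^{2}-\delta'|a|)^{+}\geq0$.
\begin{enumerate}
\item[1.] Since $w\perp(a^{2}-\delta'|a|)^{-}$ and $A$ is an $f$-algebra, we get $w(a^{2}-\delta'|a|)=w^{2}\geq0$.
\item[2.] Put $z=|a|w\geq0$. Step 1 rewrites as $|a|z\geq\delta' z$.
\item[3.] Applying the lattice seminorm and submultiplicativity,
\[
\delta' p(z)\leq p(|a|z)\leq p(|a|)p(z)=\delta p(z).
\]
Since $\delta'>\delta$, this gives $p(z)=0$, so $z$ is nilpotent by nil-faithfulness.
\item[4.] By Lemma \ref{Pagter}(ii), $z=|a|w\in N(A)^{d}$. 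An element that is both nilpotent and in $N(A)^{d}$ is zero, so $z=0$.
\item[5.] Hence $a^{2}w=|a|z=0$, and Step 1 gives $w^{2}=a^{2}w-\delta'|a|w=0$. Thus $w\in N(A)$.
\item[6.] Also $0\leq w\leq a^{2}\in N(A)^{d}$, and $N(A)^{d}$ is solid, so $w\in N(A)^{d}$. Therefore $w=0$, i.e. $a^{2}\leq\delta'|a|$.
\end{enumerate}

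The main obstacle is the choice of the auxiliary elements $w$ and $z$. A naive argument would want $|a|>\delta'$ on the support of $w$, which has no algebraic meaning without a unit. Passing to $z=|a|w$ replaces this by the inequality $|a|z\geq\delta' z$, which the seminorm can exploit. The remaining steps are the two-step use of Lemma \ref{Pagter}(ii) together with the fact that $N(A)\cap N(A)^{d}=\{0\}$.
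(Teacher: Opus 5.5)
Your proof is correct and follows essentially the same route as the paper. For necessity, the paper also takes $b=(a^{2}-\delta a)^{+}$ with $\delta$ slightly above $p(a)$ and multiplies the defining inequality by $b^{2}$; you multiply by $w$ instead, so $z=|a|w$ plays the role of $b^{2}a$. It then combines nil-faithfulness with Lemma \ref{Pagter}(ii), merely phrasing this as a contradiction, and sufficiency is the same appeal to Lemma \ref{Fundamental}. Your handling of general $a$ via $|a|$, and of the case $p_{\infty}(a)=0$ (where Lemma \ref{Fundamental} needs $\delta>0$), makes explicit two small points the paper leaves implicit.
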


\begin{proof}
Assume that $p$ is nil-faithful and submultiplicative and let $a\in A$ with
$a>0$. Fix $n\in\left\{  1,2,...\right\}  $ and define%
\[
\delta=\frac{1}{n}+p\left(  a\right)  \text{\quad and\quad}b=\left(
a^{2}-\delta a\right)  ^{+}.
\]
Suppose, for contradiction, that $b>0$. By Lemma \ref{Pagter}, we have%
\[
0<b\leq a^{2}\in N\left(  A\right)  ^{d},
\]
and hence $b\notin N\left(  A\right)  $. Furthermore,%
\[
0<b^{3}=b^{2}\left(  a^{2}-\delta a\right)  =b^{2}a^{2}-\delta b^{2}a,
\]
which implies that $b^{2}a\notin N\left(  A\right)  $. Since $p$ is
nil-faithful, it follows that $p\left(  b^{2}a\right)  >0$. On the other hand,
using the monotony and the submultiplicativity of $p$, together with the
inequality%
\[
0\leq\delta b^{2}a\leq b^{2}a^{2},
\]
we obtain%
\[
0\leq\delta p\left(  b^{2}a\right)  \leq p\left(  b^{2}a^{2}\right)  \leq
p\left(  b^{2}a\right)  p\left(  a\right)  .
\]
We deduce that $\delta\leq p\left(  a\right)  $, contradicting the definition
of $\delta$. Therefore, $b=0$, and so%
\[
n\left(  a^{2}-p\left(  a\right)  a\right)  \leq a\quad\text{for all }%
n\in\left\{  1,2,...\right\}  .
\]
Since $A$ is Archimedean, we conclude that $a^{2}\leq p\left(  a\right)  a$.
By definition of $p_{\infty}$, it follows that $p_{\infty}\left(  a\right)
\leq p\left(  a\right)  $ and necessity follows.

Conversely, assume that $p_{\infty}\left(  a\right)  \leq p\left(  a\right)  $
for all $a\in A$. Hence, $a^{2}\leq p\left(  a\right)  \left\vert a\right\vert
$ for all $a\in A$. Obviously, if $p\left(  a\right)  =0$ for some $a\in A$
then $a^{2}=0$, and so $a\in N\left(  A\right)  $. This means that $p$ is
nil-faithful. We now claim that $p$ is submultiplicative. To this end, let
$a,b\in A$ and observe that from Lemma \ref{Fundamental} it follows that%
\[
\left\vert ab\right\vert \leq p\left(  a\right)  \left\vert b\right\vert .
\]
Therefore,%
\[
p\left(  ab\right)  \leq p\left(  p\left(  a\right)  \left\vert b\right\vert
\right)  =p\left(  a\right)  p\left(  b\right)  .
\]
This shows that $p$ is submultiplicative, as desired.
\end{proof}

We are in position at this point to state and prove the central result of this paper.

\begin{theorem}
\label{Main}Let $A$ be an Archimedean $\ell$-algebra. Then the following
conditions are equivalent.

\begin{enumerate}
\item[\emph{(i)}] $A$ has the order-to-ring ideal property

\item[\emph{(ii)}] $A$ is an $f$-algebra and $A=A^{\ast}$.

\item[\emph{(iii)}] $A$ is an $f$-algebra with a nil-faithful
submultiplicative lattice seminorm.
\end{enumerate}
\end{theorem}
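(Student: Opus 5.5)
The plan is to prove (i)$\Rightarrow$(ii)$\Rightarrow$(iii)$\Rightarrow$(ii)$\Rightarrow$(i). The two implications involving (iii) follow quickly from the results of Sections 3 and 4. The work lies in the equivalence of (i) and (ii), which is essentially the Basly--Triki characterization.

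For (ii)$\Rightarrow$(iii), assume $A=A^{\ast}$. Then $p_{\infty}$ is real-valued on all of $A$, and Proposition \ref{Seminorm} says it is a nil-faithful submultiplicative lattice seminorm. For (iii)$\Rightarrow$(ii), let $p$ be such a seminorm. Lemma \ref{Gauge} gives $p_{\infty}(a)\leq p(a)<\infty$ for every $a\in A$, so $A=A^{\ast}$.

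For (ii)$\Rightarrow$(i), let $J$ be an order ideal, $x\in J$ and $a\in A$. Since $a\in A^{\ast}$, Lemma \ref{Fundamental} gives some $\delta\in(0,\infty)$ with $|ax|\leq\delta|x|$. Solidity then yields $ax\in J$. Because Archimedean $f$-algebras are commutative, $xa=ax\in J$ as well, so $J$ is a ring ideal.

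For (i)$\Rightarrow$(ii), the first step is to show $A$ is an $f$-algebra. Disjoint complements $\{a\}^{d}$ are order ideals, so by (i) they are ring ideals, which is exactly the definition of an $f$-algebra. Next, fix $a\in A$ and consider the principal order ideal $A_{a}$. By (i) it is a ring ideal, so $a^{2}=a\cdot a\in A_{a}$. Hence $|a^{2}|\leq\delta|a|$ for some $\delta>0$, and since $a^{2}\geq0$ in an $f$-algebra, $a^{2}\leq\delta|a|$. Thus $a\in A^{\ast}$.

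I expect no real obstacle. The only points needing care are that the $f$-algebra property must be established before commutativity and Lemma \ref{Fundamental} are used, and that the Archimedean hypothesis enters only through commutativity and the cited lemmas. The substantive difficulty was already absorbed into Lemma \ref{Gauge}.
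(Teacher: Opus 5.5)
Your proposal is correct and follows essentially the paper's proof: (i)$\Rightarrow$(ii) is identical, and (ii)$\Rightarrow$(iii) uses $p_{\infty}$ with Proposition \ref{Seminorm} as the paper intends. The only difference is that you split the paper's direct (iii)$\Rightarrow$(i) (Lemma \ref{Gauge} followed by Lemma \ref{Fundamental}) into (iii)$\Rightarrow$(ii)$\Rightarrow$(i), which avoids applying Lemma \ref{Fundamental} with $\delta=p(a)$ when $p(a)$ may be $0$, and you also spell out that commutativity gives $xa\in J$ and that $a^{2}\geq 0$ turns $|a^{2}|\leq\delta|a|$ into $a^{2}\leq\delta|a|$.
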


\begin{proof}
$\mathrm{(i)\Rightarrow(ii)}$ This implication was already proved by Basly and
Triki in \cite{BT-1988}. For completeness, we include the details here. For
each $a\in A$, the disjoint complement $\left\{  a\right\}  ^{d}$ is an order
ideal of $A$, and hence, by assumption, a ring ideal of $A$. This implies that
$A$ is an $f$-algebra. Now consider%
\[
A_{a}=\left\{  x\in A:\left\vert x\right\vert \leq\delta\left\vert
a\right\vert \text{ for some }\delta\in\left(  0,\infty\right)  \right\}  .
\]
This is the principal order ideal of $A\ $generated by $a$. By hypothesis,
$A_{a}$ is a ring ideal of $A$. Since $a\in A_{a}$, it follows that $a^{2}\in
A_{a}$. Therefore, there exists $\delta\in\left(  0,\infty\right)  $ such that
$a^{2}\leq\delta\left\vert a\right\vert $, which yields that $A=A^{\ast}$.

$\mathrm{(ii)\Rightarrow(iii)}$ This follows by considering $p_{\infty}$ and
applying Proposition \ref{Gauge}.

$\mathrm{(iii)\Rightarrow(i)}$ Let $p$ be a nil-faithful submultiplicative
lattice seminorm on $A$, and let $J$ be an order ideal of $A$. We claim that
$J$ is a ring ideal of $A$. To this end, choose $a\in A$ and $x\in J$. We must
prove that $ax\in J$. By Lemma \ref{Gauge}, we have%
\[
a^{2}\leq p\left(  a\right)  \left\vert a\right\vert .
\]
Using Lemma \ref{Fundamental}, we get%
\[
\left\vert ax\right\vert \leq p\left(  a\right)  \left\vert x\right\vert ,
\]
which shows that $ax\in J$. Thus, $J$ is an ring ideal of $A$, which is the
desired result.
\end{proof}

As usual, an $\ell$-algebra equipped with a submultiplicative lattice seminorm
is referred to as a \emph{normed }$\ell$\emph{-algebra. }Corollary 4 in
\cite{BT-1988} shows that a Banach $\ell$-algebra (i.e., a normed $\ell
$-algebra whose norm is complete) has the order-to-ring ideal property if and
only if it is an $f$-algebra. The following result extends this fact to
general normed $\ell$-algebras.

\begin{corollary}
\label{Banach extension}Let $A$ be a normed $\ell$-algebra. Then the following
conditions are equivalent.

\begin{enumerate}
\item[\emph{(i)}] $A$ has the order-to-ring ideal property.

\item[\emph{(ii)}] $A$ is an $f$-algebra with $A=A^{\ast}$.

\item[\emph{(iii)}] $A$ is an $f$-algebra.
\end{enumerate}
\end{corollary}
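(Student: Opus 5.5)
The equivalence of (i) and (ii) is contained in Theorem \ref{Main}, which applies because a normed $\ell$-algebra is Archimedean: $nx\leq y$ for all $n$ gives $x\leq y/n$, hence $x^{+}\leq (y/n)^{+}$ and $\|x^{+}\|\leq \|y^{+}\|/n\to0$, so $x^{+}=0$. Also (ii) trivially implies (iii). It remains to prove (iii)$\Rightarrow$(ii). I plan to apply Theorem \ref{Main} directly, since a normed $\ell$-algebra carries a submultiplicative lattice norm and every norm is nil-faithful.

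\emph{Remark on the definition.} The paper's definition of a normed $\ell$-algebra says ``submultiplicative lattice seminorm''. If the seminorm is genuinely only a seminorm, then nil-faithfulness is not automatic, and this is the one point needing care. I will read the definition as intended, namely with a lattice norm, as in the Banach setting of \cite{BT-1988}. Then $\|\cdot\|$ is a submultiplicative lattice norm and is nil-faithful trivially, since $\|a\|=0$ forces $a=0$.

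Now assume (iii), so $A$ is an $f$-algebra with such a norm. The implication (iii)$\Rightarrow$(ii) of Theorem \ref{Main} then gives $A=A^{\ast}$. Equivalently, by Lemma \ref{Gauge}, $p_{\infty}(a)\leq\|a\|<\infty$ for every $a\in A$, that is, $a^{2}\leq\|a\|\,|a|$, which is exactly $A=A^{\ast}$. Combining this with the earlier observations closes the cycle of implications.

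The main obstacle is therefore only the bookkeeping above: checking that the hypotheses of Theorem \ref{Main} hold, namely that $A$ is Archimedean and that the norm is nil-faithful. Once these are in place, Lemma \ref{Gauge} does all the work.
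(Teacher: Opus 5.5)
Your proposal is correct and follows essentially the paper's route: both reduce everything to Theorem \ref{Main}, using that a lattice norm forces $A$ to be Archimedean and that a norm is trivially nil-faithful. The paper closes the cycle via (iii)$\Rightarrow$(i) rather than your (iii)$\Rightarrow$(ii), which is immaterial, and your explicit Archimedean check and your reading of ``normed $\ell$-algebra'' as carrying a genuine norm both match what the paper's proof tacitly uses.
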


\begin{proof}
$\mathrm{(i)\Rightarrow(ii)}$ Since $A$ carries a lattice norm, it is
Archimedean. By Theorem \ref{Main}, $A$ is an $f$-algebra.

$\mathrm{(ii)\Rightarrow(iii)}$ Trivial.

$\mathrm{(iii)\Rightarrow(i)}$ As observed earlier, every norm is a
nil-faithful seminorm. Therefore, $A$ satisfies the condition $\mathrm{(iii)}
$ in Theorem \ref{Main}, and so $A$ has the order-to-ring ideal property.
\end{proof}

An $\ell$-algebra $A$ is called \emph{semiprime} if $N\left(  A\right)
=\left\{  0\right\}  $, that is, if $0$ is the only nilpotent element of $A$.
The following corollary gives the correct formulation of Theorem 5 in
\cite{BT-1988}, which we show in the introduction that is false.

\begin{corollary}
\label{Semiprime}Let $A$ be an Archimedean semiprime $\ell$-algebra. Then the
following conditions are equivalent.

\begin{enumerate}
\item[\emph{(i)}] $A$ has the order-to-ring ideal property.

\item[\emph{(ii)}] $A=A^{\ast}$.

\item[\emph{(iii)}] $A$ has a submultiplicative lattice norm.
\end{enumerate}
\end{corollary}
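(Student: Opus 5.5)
The plan is to deduce the corollary from Theorem \ref{Main}. The only new input is that in a semiprime algebra $N\left(A\right)=\left\{0\right\}$, so a nil-faithful seminorm is exactly a norm. First, though, the wording of the statement must be pinned down. The set $A^{\ast}$ was introduced in Section 3 only for Archimedean $f$-algebras, so condition (ii) has to be read as ``$A$ is an $f$-algebra and $A=A^{\ast}$''. Condition (iii) needs the same reading, ``$A$ is an $f$-algebra with a submultiplicative lattice norm''.

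This reading of (iii) is not merely cosmetic; without it (iii)$\Rightarrow$(i) fails. Take $A=\mathbb{R}^{2}$ with the coordinatewise order and the product $\left(a,b\right)\left(c,d\right)=\left(ac+bd,ad+bc\right)$, i.e. the group algebra of $\mathbb{Z}/2$. This is an Archimedean $\ell$-algebra. It is semiprime, since it is isomorphic as an algebra to $\mathbb{R}\times\mathbb{R}$. Moreover, $\left\Vert \left(a,b\right)\right\Vert =\left\vert a\right\vert +\left\vert b\right\vert $ is a submultiplicative lattice norm on it. Yet $\left(1,0\right)\wedge\left(0,1\right)=0$ while $\left(1,0\right)\left(0,1\right)=\left(0,1\right)\neq0$, so $A$ is not an $f$-algebra. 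Concretely, the order ideal $\left\{\left(a,0\right):a\in\mathbb{R}\right\}$ is not a ring ideal. I will therefore prove the equivalence under this reading, with the $f$-algebra requirement attached to (ii) and (iii).

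For (i)$\Rightarrow$(ii), I apply Theorem \ref{Main}, (i)$\Rightarrow$(ii), directly; it gives that $A$ is an $f$-algebra with $A=A^{\ast}$. For (ii)$\Rightarrow$(iii), Proposition \ref{Seminorm} shows that $p_{\infty}$ is a nil-faithful submultiplicative lattice seminorm on $A^{\ast}=A$. Nil-faithfulness says that $p_{\infty}\left(a\right)=0$ forces $a\in N\left(A\right)=\left\{0\right\}$, so $p_{\infty}$ is in fact a norm. For (iii)$\Rightarrow$(i), a norm is trivially nil-faithful, so $A$ is an $f$-algebra satisfying condition (iii) of Theorem \ref{Main}, and that theorem yields the order-to-ring ideal property.

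No step is a real obstacle once Theorem \ref{Main} is available. The only delicate point is the one above: the $f$-algebra hypothesis must be carried along in (ii) and (iii), since the norm alone does not force it, as the $\mathbb{Z}/2$ example shows. The other thing to check is that semiprimeness is used exactly once, in the passage from ``nil-faithful seminorm'' to ``norm'' in (ii)$\Rightarrow$(iii).
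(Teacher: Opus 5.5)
Your proof is correct and follows the paper's route. Each implication is read off from Theorem \ref{Main}, and semiprimeness enters only to upgrade the nil-faithful seminorm $p_{\infty}$ to a norm in (ii)$\Rightarrow$(iii). You are also right to attach the $f$-algebra hypothesis to (ii) and (iii): the paper's one-line argument for (iii)$\Rightarrow$(i) silently invokes condition (iii) of Theorem \ref{Main}, which presupposes that $A$ is an $f$-algebra, and your example on $\mathbb{R}^{2}$ (the group algebra of $\mathbb{Z}/2$) correctly shows that the statement fails for general Archimedean semiprime $\ell$-algebras.
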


\begin{proof}
$\mathrm{(i)\Rightarrow(ii)}$ This follows directly from Theorem \ref{Main}.

$\mathrm{(ii)\Rightarrow(iii)}$ By Theorem \ref{Main}, $A$ has a nil-faithful
submultiplicative lattice seminorm $p$. Since $A$ is semiprime, it follows
that $p$ is a norm.

$\mathrm{(iii)\Rightarrow(i)}$ This is also a straightforward consequence of
Theorem \ref{Main}.
\end{proof}

\section{An application to orthomorphisms}

All operators considered in this last section are assumed to be linear.

Let $L$ be an Archimedean vector lattice. Recall that the set $\mathcal{L}%
\left(  L\right)  $ of all operators on $L$ is a (real) vector space with
respect to the pointwise addition and scalar multiplication. Moreover, the
operator composition makes $\mathcal{L}\left(  L\right)  $ a unital
associative algebra, whose unit element is the identity operator on $L$,
denoted by $I_{L}$.

An operator $T$ on $L$ is said to be \emph{order bounded} if, for every $u\in
L$ there exists $v\in L$ such that%
\[
\left\vert T\left(  x\right)  \right\vert \leq v\quad\text{for all }x\in
L\text{ with }\left\vert x\right\vert \leq u.
\]
An order bounded operator $T$ on $L$ is referred to as an \emph{orthomorphism}
if, for every $x,y\in L$,%
\[
\left\vert x\right\vert \wedge\left\vert y\right\vert =0\quad
\text{implies\quad}\left\vert T\left(  x\right)  \right\vert \wedge\left\vert
y\right\vert =0.
\]
Notice that $I_{L}$ is an orthomorphism on $L$. We denote by $\mathrm{Orth}%
\left(  L\right)  $ the set of all orthomorphisms on $L$, and proceed to study
its lattice-order algebraic structure.

One can readily checked that $\mathrm{Orth}\left(  L\right)  $ is a subalgebra
of $\mathcal{L}\left(  L\right)  $. Furthermore, an order can be introduced on
$\mathrm{Orth}\left(  L\right)  $ in a pointwise manner by declaring that%
\[
T\leq S\text{ in }\mathrm{Orth}\left(  L\right)  \text{\quad if and only
if\quad}S\left(  x\right)  \leq T\left(  x\right)  \text{ for all }x\in
L^{+}.
\]
It turns out that, equipped with this order, $\mathrm{Orth}\left(  L\right)  $
is an Archimedean semiprime $f$-algebra. The absolute value in $\mathrm{Orth}%
\left(  L\right)  $ enjoys a remarkable and highly useful property.
Specifically, if $T\in\mathrm{Orth}\left(  L\right)  $ then%
\[
\left\vert T\right\vert \left(  \left\vert x\right\vert \right)  =\left\vert
T\left(  \left\vert x\right\vert \right)  \right\vert =\left\vert T\left(
x\right)  \right\vert \quad\text{for all }x\in L.
\]
The principal order ideal of $\mathrm{Orth}\left(  L\right)  $ generated by
$I_{L}$ is denoted by $Z\left(  L\right)  $ and called the \emph{center} of
$L$. Accordingly, any operator in $Z\left(  L\right)  $ is called
\emph{central}. Thus, if $T\in\mathrm{Orth}\left(  L\right)  $ then $T$ is
central if and only if%
\[
\left\vert T\right\vert \leq\delta I_{L}\quad\text{for some }\delta\in\left(
0,\infty\right)  .
\]
For further information on the theory of orthomorphisms, the reader is
encouraged to consult \cite{P-1981} or \cite{Z-1983}.

As already observed in the introduction, characterizing Archimedean vector
lattices $L$ satisfying $\mathrm{Orth}\left(  L\right)  =Z\left(  L\right)  $
remains a long-standing open problem in the theory of operators on vector
lattices. A partial, yet fundamental, answer is provided by a classical result
of Wickstead \cite{W-1977}, namely that this equality holds for every Banach
lattice $L$. This important result has been extensively used by Basly and
Triki \cite{BT-1988} to obtain the Banach version of Corollary
\ref{Banach extension}. In what follows, we reverse this viewpoint by
recovering Wickstead's theorem as a direct consequence of this corollary.

First, let's state the following theorem, which follows directly from
Corollary \ref{Semiprime} since $\mathrm{Orth}\left(  L\right)  $ is an
Archimedean semiprime $f$-algebra.

\begin{theorem}
\label{Open}Let $L$ be an Archimedean vector lattice. Then $\mathrm{Orth}%
\left(  L\right)  =Z\left(  L\right)  $ if and only if $\mathrm{Orth}\left(
L\right)  $ has a submultiplicative lattice norm.
\end{theorem}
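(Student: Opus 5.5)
The plan is to apply Corollary \ref{Semiprime} to $A=\mathrm{Orth}\left(L\right)$, which is an Archimedean semiprime $f$-algebra. Condition (iii) of that corollary is exactly the right-hand side of Theorem \ref{Open}. So the whole task is to show that $\mathrm{Orth}\left(L\right)=Z\left(L\right)$ holds if and only if $A=A^{\ast}$, where $A^{\ast}$ is the set of bounded elements of $\mathrm{Orth}\left(L\right)$.

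The inclusion $Z\left(L\right)\subseteq A^{\ast}$ is immediate. If $\left\vert T\right\vert\leq\delta I_{L}$, then multiplying by $\left\vert T\right\vert\geq0$ (the positive cone is closed under multiplication) gives
\[
T^{2}=\left\vert T\right\vert^{2}\leq\delta\left\vert T\right\vert .
\]
Hence $\mathrm{Orth}\left(L\right)=Z\left(L\right)$ implies $A=A^{\ast}$.

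For the converse, take $T\in A^{\ast}$, so that $T^{2}\leq\delta\left\vert T\right\vert$ for some $\delta\in\left(0,\infty\right)$. Lemma \ref{Fundamental} then gives $\left\vert TS\right\vert\leq\delta\left\vert S\right\vert$ for every $S\in\mathrm{Orth}\left(L\right)$. Choosing $S=I_{L}$ yields $\left\vert T\right\vert\leq\delta I_{L}$, so $T\in Z\left(L\right)$. This shows $A^{\ast}\subseteq Z\left(L\right)$; in fact $A^{\ast}=Z\left(L\right)$ because $\mathrm{Orth}\left(L\right)$ is unital with unit $I_{L}$. Therefore $A=A^{\ast}$ is equivalent to $\mathrm{Orth}\left(L\right)=Z\left(L\right)$, and Corollary \ref{Semiprime} (ii)$\Leftrightarrow$(iii) finishes the proof.

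There is no serious obstacle. The only point needing care is that the unit $I_{L}$ may be used as the test element $S$ in Lemma \ref{Fundamental}, and that $\left\vert T I_{L}\right\vert=\left\vert T\right\vert$. Both are clear because $\mathrm{Orth}\left(L\right)$ is a unital $f$-algebra whose multiplication is composition. One could also note that $Z\left(L\right)$ carries the submultiplicative lattice norm $\inf\{\delta:\left\vert T\right\vert\leq\delta I_{L}\}$, which by the above coincides with $p_{\infty}$; this makes the forward direction concrete.
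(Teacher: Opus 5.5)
Your proof is correct and is essentially the paper's argument. The paper simply cites Corollary \ref{Semiprime} for the Archimedean semiprime $f$-algebra $\mathrm{Orth}(L)$, and you additionally spell out the step it leaves implicit: the bounded elements of $\mathrm{Orth}(L)$ are exactly the central operators, via Lemma \ref{Fundamental} with $S=I_L$ in one direction and multiplying $|T|\le\delta I_L$ by $|T|\ge 0$ in the other.
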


We proceed to show how we can obtain the aforementioned Wickstead's theorem as
a consequence of Theorem \ref{Open}.

\begin{corollary}
\emph{(Wickstead)} If $L$ is a Banach lattice, then $\mathrm{Orth}\left(
L\right)  =Z\left(  L\right)  $.
\end{corollary}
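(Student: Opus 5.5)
By Theorem \ref{Open}, it suffices to equip $\mathrm{Orth}\left(L\right)$ with a submultiplicative lattice norm whenever $L$ is a Banach lattice. The natural candidate is the operator norm. So the plan is to show three things in turn: every orthomorphism on $L$ is norm bounded; the operator norm restricted to $\mathrm{Orth}\left(L\right)$ is a lattice norm for the $f$-algebra order; and it is submultiplicative.

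For boundedness, recall that an orthomorphism is order bounded, hence regular on $L$. On a Banach lattice every positive operator, and hence every regular operator, is continuous (see \cite{AB-2006-PO}). So $\left\Vert T\right\Vert =\sup\left\{ \left\Vert T\left(x\right)\right\Vert :\left\Vert x\right\Vert \leq1\right\}$ is finite for every $T\in\mathrm{Orth}\left(L\right)$. It is a norm, since $T=0$ as an operator exactly when $\left\Vert T\right\Vert =0$. Submultiplicativity $\left\Vert TS\right\Vert \leq\left\Vert T\right\Vert \left\Vert S\right\Vert$ is standard for operator norms, because the product in $\mathrm{Orth}\left(L\right)$ is composition.

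The key step is the lattice property. It rests on the identity $\left\vert T\right\vert \left(\left\vert x\right\vert \right)=\left\vert T\left(x\right)\right\vert$ recorded earlier. First, $\left\Vert \left\vert T\right\vert \right\Vert =\left\Vert T\right\Vert$. Indeed, for $\left\Vert x\right\Vert \leq1$ we have $\left\Vert T\left(x\right)\right\Vert =\left\Vert \left\vert T\left(x\right)\right\vert \right\Vert =\left\Vert \left\vert T\right\vert \left(\left\vert x\right\vert \right)\right\Vert$ with $\left\Vert \left\vert x\right\vert \right\Vert \leq1$, which gives $\left\Vert T\right\Vert \leq\left\Vert \left\vert T\right\vert \right\Vert$. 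Conversely, $\left\vert \left\vert T\right\vert \left(x\right)\right\vert \leq\left\vert T\right\vert \left(\left\vert x\right\vert \right)=\left\vert T\left(x\right)\right\vert$, so $\left\Vert \left\vert T\right\vert \left(x\right)\right\Vert \leq\left\Vert T\left(x\right)\right\Vert$. Second, monotonicity on positives. Here I read the order on $\mathrm{Orth}\left(L\right)$ as the intended one: $0\leq T\leq S$ means $T\left(x\right)\leq S\left(x\right)$ for $x\in L^{+}$. Then for any $x$ we get $\left\vert T\left(x\right)\right\vert \leq T\left(\left\vert x\right\vert \right)\leq S\left(\left\vert x\right\vert \right)$. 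Since the norm of $L$ is a lattice norm, $\left\Vert T\left(x\right)\right\Vert \leq\left\Vert S\left(\left\vert x\right\vert \right)\right\Vert \leq\left\Vert S\right\Vert \left\Vert x\right\Vert$. Hence $\left\Vert T\right\Vert \leq\left\Vert S\right\Vert$.

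By the characterization of lattice seminorms given in Section 3, these two facts make the operator norm a lattice norm. Theorem \ref{Open} then yields $\mathrm{Orth}\left(L\right)=Z\left(L\right)$.

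The main obstacle is the automatic continuity of positive operators on Banach lattices. This is the only place where completeness of $L$ enters, and it is what makes the operator norm finite. I would cite it from \cite{AB-2006-PO} rather than reprove it; if a proof is needed, the standard argument is a contradiction via $\sum 2^{-n}x_{n}$ with $\left\Vert x_{n}\right\Vert \leq 1$, $x_{n}\geq0$ and $\left\Vert T\left(x_{n}\right)\right\Vert \geq n2^{n}$.
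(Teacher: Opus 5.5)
Your proposal is correct and follows the paper's proof. Both apply Theorem \ref{Open} to the operator norm, use automatic continuity of positive operators for finiteness, and use the identity $\left\vert T\right\vert \left(\left\vert x\right\vert \right)=\left\vert T\left(x\right)\right\vert$ for the lattice property; you split that last step into $\left\Vert \left\vert T\right\vert \right\Vert =\left\Vert T\right\Vert$ plus monotonicity on positives, where the paper compares $\left\vert S\left(x\right)\right\vert \leq\left\vert T\left(x\right)\right\vert$ directly. One small imprecision: ``order bounded, hence regular'' does not hold for arbitrary operators on a non-Dedekind-complete $L$, but it does hold for orthomorphisms, because $\left\vert T\right\vert$ exists in $\mathrm{Orth}\left(L\right)$ and so $T=T^{+}-T^{-}$ with $T^{\pm}$ positive.
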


\begin{proof}
In view of Theorem \ref{Open}, it suffices to prove that $\mathrm{Orth}\left(
L\right)  $ is equipped with a submultiplicative lattice norm. Let $B\left(
L\right)  $ denote the Banach algebra $B\left(  L\right)  $ of all continuous
operators on $L$ (see, e.g., \cite[Section \S 1]{BD-1973}). The norm on
$B\left(  L\right)  $ is defined by%
\[
\left\Vert T\right\Vert =\sup\left\{  \left\Vert T\left(  x\right)
\right\Vert :x\in L\text{ and }\left\Vert x\right\Vert \leq1\right\}
\quad\text{for all }T\in L\left(  B\right)  .
\]
Since any positive operator on $L$ is continuous (see \cite[Theorem
4.3]{AB-2006-PO}), every operator in $\mathrm{Orth}\left(  L\right)  $ is
continuous, as difference between two positive operators on $L$. It readily
follows that $\mathrm{Orth}\left(  L\right)  $ is a subalgebra of $L\left(
B\right)  $. Therefore, the restriction of the above supremum norm on
$\mathrm{Orth}\left(  L\right)  $ is a submultiplicative norm. It remains to
show that it is a lattice norm. Let $S,T\in\mathrm{Orth}\left(  L\right)  $
such that $\left\vert S\right\vert \leq\left\vert T\right\vert $ in
$\mathrm{Orth}\left(  L\right)  $. Hence, for every $x\in L$, we have%
\[
\left\vert T\left(  x\right)  \right\vert =\left\vert T\right\vert \left(
\left\vert x\right\vert \right)  \leq\left\vert S\right\vert \left(
\left\vert x\right\vert \right)  =\left\vert S\left(  x\right)  \right\vert
\]
(where we use, e.g., \cite[Theorem 140.4]{Z-1983}). Accordingly, $\left\Vert
T\right\Vert \leq\left\Vert S\right\Vert $ and the corollary follows.
\end{proof}

We conclude this paper by presenting a few observations that we believe are
worth mentioning and that may be of interest for further discussion.

\begin{enumerate}
\item In \cite[Theorem 5]{BT-1988}, the reader will observe yet another
necessary and sufficient condition on an Archimedean semiprime $f$-algebra for
the order-to-ring ideal property, namely that $A$ admits an $M$-norm (see
Definition 4.20 in \cite{AB-2006-PO}). However, this equivalence is not
correct either. Indeed, the Archimedean semiprime $f$-algebra in Example
\ref{Example} contains unbounded functions, although it admits an $M$-norm
(the norm given in the example itself). As a matter of fact, the equivalence
becomes valid when replacing the condition of having an $M$-norm by that of
having a submultiplicative $M$-norm. This follows from the fact that the
submultiplicative lattice norm $p_{\infty}$ on any Archimedean semiprime
$f$-algebra is an $M$-norm.

\item There exists an alternative characterization of nil-faithful seminorms
on $f$-algebras that may be of independent interest. A seminorm $p$ on an
$f$-algebra $A$ is nil-faithful if and only if $N\left(  A\right)  $ is
$p$-closed in $A$. First, suppose that $N\left(  A\right)  $ is $p$-closed in
$A$ and let $a\in A$ satisfy $p\left(  a\right)  =0$. Define $a_{n}=0$ for all
$n\in\left\{  1,2,...\right\}  $. Then $a_{n}\in N\left(  A\right)  $ for all
$n\in\left\{  1,2,...\right\}  $, and the sequence $\left(  a_{n}\right)
_{n=1}^{\infty}$ is converges to $a$ with respect to $p$. Since $N\left(
A\right)  $ is $p$-closed, it follows that $a\in N\left(  A\right)  $, showing
that $p$ is nil-faithful. Conversely, assume that $p$ is nil-faithful and let
us show that $N\left(  A\right)  $ is $p$-closed. Pick a sequence $\left(
a_{n}\right)  _{n=1}^{\infty}$ in $N\left(  A\right)  $ that converges to some
$a\in A$ with respect to $p$. For each $n\in\left\{  1,2,...\right\}  $, we
have%
\[
0\leq p\left(  a^{2}\right)  =p\left(  \left(  a_{n}-a\right)  ^{2}\right)
\leq p\left(  a_{n}-a\right)  ^{2}.
\]
Letting $n\rightarrow\infty$, we get $p\left(  a^{2}\right)  =0$ and so
$a^{2}\in N\left(  A\right)  $. This implies that $a\in N\left(  A\right)  $,
as required.

\item As usual, let $C\left(  X\right)  $ denote the set of all real-valued
continuous functions on a topological space $X$. It is well known (and easy to
verify) that $C\left(  X\right)  $ is an Archimedean semiprime $f$-algebra
with respect to the pointwise algebraic operations and order \cite{AB-2006-PO}%
. Hence, it follows from Corollary \ref{Semiprime} that $C\left(  X\right)  $
admits a submultiplicative lattice norm if and only if $C\left(  X\right)  $
contains no unbounded functions. This result can also be derived as a
consequence of a theorem of Jorosz \cite{J-2015}, who proved a similar
statement without assuming that the norm is a lattice norm.

\item The reader will recognize in Lemma \ref{Gauge} an echo of a classical
theorem of Kaplansky \cite{K-1949}, namely, for a compact space $X$, the
supremum norm on $C\left(  X\right)  $ is minimal among all submultiplicative norms.
\end{enumerate}

\end{document}